\newcommand{\Z}{\mathbb{Z}}
\newcommand{\G}{\mathbb{Z}[i]}
\newcommand{\Geven}{\mathbb{Z}[2i]}
\DeclareMathOperator{\imag}{Im}
\DeclareMathOperator{\real}{Re}
\DeclareMathOperator{\sgn}{sgn}
\title{Gaussian Happy Numbers}
\author{
Breeanne Baker Swart \\
Department of Mathematical Sciences \\
The Citadel\\
171 Moultrie St.,
Charleston, SC 29409 \\
{\tt breeanne.swart@citadel.edu} \\
\ \\
Susan Crook\\
Division of Mathematics, Engineering and Computer Science\\
Loras College\\ 
1450 Alta Vista St.,
Dubuque, IA 52001\\
{\tt susan.crook@loras.edu} \\
\ \\
Helen G. Grundman\\
Department of Mathematics\\
Bryn Mawr College\\
101 N. Merion Ave.,
Bryn Mawr, PA 19010\\
{\tt grundman@brynmawr.edu} \\
\ \\
Laura L. Hall-Seelig\\
Department of Mathematics\\
Merrimack College\\
315 Turnpike Street,
North Andover, MA 01845\\
{\tt hallseeligl@merrimack.edu} 
}
\begin{document}
\theoremstyle{plain}
\newtheorem{theorem}{Theorem}
\newtheorem{cor}[theorem]{Corollary}
\newtheorem{lemma}[theorem]{Lemma}
\newtheorem{proposition}[theorem]{Proposition}
\newtheorem{claim}[theorem]{Claim}
\newtheorem{quest}[theorem]{Question}
\newtheorem{conjecture}[theorem]{Conjecture}

\theoremstyle{definition}
\newtheorem{definition}{Definition}

\everymath{\displaystyle}

\maketitle

\begin{abstract}
This paper extends the concept of a $B$-happy number, for $B \geq 2$, from the rational integers, $\Z$, to the Gaussian integers, $\G$.  
We investigate the fixed points and cycles of the Gaussian
$B$-happy functions, determining them for small values of $B$ and providing a method for computing them for any $B \geq 2$.  We discuss heights of Gaussian $B$-happy numbers, proving results concerning the smallest Gaussian $B$-happy numbers of certain heights.  Finally, we prove conditions for the existence and non-existence of arbitrarily long arithmetic sequences of Gaussian $B$-happy numbers.
\end{abstract}

\section{Introduction}
Happy numbers~\cite{oeis} and, for bases other than 10, generalized happy numbers~\cite{genhappy}, are defined in terms of iterating the base $B$ happy function $S_B:\Z^+ \rightarrow \Z^+$, defined by
\begin{equation*}
S_B\left(\sum_{j=0}^n a_j B^j  \right) =
\sum_{j=0}^n a_j^2,
\end{equation*}
where $B\geq 2$, $a_n \neq 0$, and, for each $j$, $0\leq a_j \leq B-1$.
The function has been generalized in other ways, allowing for other exponents~\cite{genhappy,fifth}, and allowing for the addition of an augmentation constant~\cite{augment, desert, oasis}.  In this paper, we extend the concept of generalized happy numbers to $\G$, the set of Gaussian integers.  Although we restrict our attention to the case with exponent two, we note that higher exponents may also lead to interesting results.

Let $B \geq 2$. For $a + b i \in \G - \{0\}$, we write 
\begin{equation}\label{notation}
a + b i = \sum_{j = 0}^n (a_j + b_j i) B^j,
\end{equation}
where $a_n$ and $b_n$ are not both $0$ and, for each $j$,
$\sgn(a) a_j \geq 0$, $\sgn(b) b_j \geq 0$, 
$|a_j| \leq B - 1$, and $|b_j| \leq B - 1$. 
Note that these conditions mean that each nonzero $a_j$ has the same sign as $a$ and each nonzero $b_j$ has the same sign as $b$.


\begin{definition}
For an integer $B\geq 2$, the \emph{Gaussian $B$-happy function} $S_B : \G \to \G$ is defined by $S_B (0) = 0$ and for $a + b i \in \G - \{0\}$,
\[
S_B (a + b i) = \sum_{j = 0}^n \left(a_j + b_j i\right)^2 = \sum_{j = 0}^n \left(a_j^2 - b_j^2\right) + 2 \left(\sum_{j = 0}^n a_jb_j\right)i.
\]
A Gaussian integer $a + b i$ is a \emph{(Gaussian) $B$-happy number} if, for some $k \in \Z^+,$ $S_B^k (a + b i) = 1$. 
\end{definition}

We note that the Gaussian $B$-happy function, when restricted to rational integers, agrees with the generalized $B$-happy function. Hence, no confusion should result from expanding the definition of the notation $S_B$ and of the term $B$-happy number in this way.  For clarity, at times we use the term \emph{rational $B$-happy numbers} to differentiate them from the Gaussian $B$-happy numbers.

We begin with some basic properties of the  Gaussian $B$-happy function. Each is proved by a straight-forward calculation.
 
\begin{lemma}\label{basic}
The following hold for each $a + b i \in \G$.
\begin{enumerate}
    \item \label{neg}
    $S_B (a + b i) =  S_B ( - ( a + b i))$.
    \item \label{conj}
    $S_B (\overline{a + b i})  =  \overline{S_B (a + b i)}$.
    \item \label{multbyi}
    $S_B \left(i(a + b i)\right) = -S_B ( a + b i)$.
    \item \label{real}
    $S_B (a + b i) \in \mathbb{R}$ if and only if for every $j$, $a_j b_j = 0$.
    \item \label{pimag}
    $S_B (a + b i)$ is purely imaginary if and only if $a = \pm b$.
    \item \label{2i}
    $S_B (a + b i)\in \Geven$, i.e., $\imag (S_B (a + b i))$ is even.
\end{enumerate}
\end{lemma}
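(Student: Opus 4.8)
The plan is to handle all six parts within one framework built on two facts: the representation in~\eqref{notation} is unique, and $S_B$ has the explicit real and imaginary parts
\[
\real(S_B(a+bi)) = \sum_{j=0}^n (a_j^2 - b_j^2), \qquad \imag(S_B(a+bi)) = 2\sum_{j=0}^n a_j b_j .
\]
Uniqueness holds because, with $B$ a rational integer, \eqref{notation} is just the ordinary base-$B$ expansion of $|a|$ and of $|b|$ with the signs of $a$ and $b$ reattached. Consequently each of (a)--(c) reduces to recording how the digits $(a_j,b_j)$ transform under the operation in question, verifying that the transformed digits still satisfy the sign and magnitude constraints of~\eqref{notation} (so that, by uniqueness, they really are the digits of the new number), and then substituting into the two sums above.

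First I would record the three digit transformations: negation sends $(a_j,b_j)\mapsto(-a_j,-b_j)$; conjugation sends $(a_j,b_j)\mapsto(a_j,-b_j)$; and multiplication by $i$ carries $a+bi$ to $-b+ai$, whose real digits are $-b_j$ and whose imaginary digits are $a_j$. Substituting each into the displayed formulas gives (a), (b), and (c) at once; for example the $-b+ai$ substitution produces real part $\sum(b_j^2-a_j^2)$ and imaginary part $2\sum(-b_j)(a_j)$, which is exactly $-S_B(a+bi)$. The constraint check is the only genuine bookkeeping, e.g.\ $\sgn(-b)(-b_j)=\sgn(b)b_j\ge 0$. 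Part (f) then falls out immediately, since $\imag(S_B(a+bi))=2\sum a_j b_j$ is visibly even.

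For (d) the essential observation is that the products $a_j b_j$ cannot cancel. Since $\sgn(a)a_j\ge 0$ and $\sgn(b)b_j\ge 0$ for all $j$, each $a_jb_j$ is either $0$ or has the common sign $\sgn(a)\sgn(b)$; hence $\sum a_j b_j=0$ if and only if every $a_jb_j=0$. As $S_B(a+bi)\in\R$ is equivalent to $\imag(S_B(a+bi))=0$, this is exactly (d).

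Part (e) is where I expect the real difficulty. Here $S_B(a+bi)$ is purely imaginary precisely when $\real(S_B(a+bi))=\sum a_j^2-\sum b_j^2=0$. The forward direction is clean: if $a=\pm b$ then $|a|=|b|$, so the base-$B$ expansions give $|a_j|=|b_j|$ and hence $a_j^2=b_j^2$ for every $j$, making the real part vanish (with nonzero imaginary part once $a+bi\ne 0$). The converse is the main obstacle, since one must recover $a=\pm b$ from the single scalar identity $\sum a_j^2=\sum b_j^2$. This does not follow from a uniform-sign argument as in (d)---the quantities $a_j^2$ and $b_j^2$ are unconstrained nonnegative integers whose sums can coincide without coinciding termwise---so I would concentrate my effort on proving the termwise equality $|a_j|=|b_j|$, checking the claim against small cases first, since equality of the digit-square-sums by itself need not force $a=\pm b$.
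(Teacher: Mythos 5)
Your framework for parts (a)--(d) and (f) is correct, and it is exactly the ``straight-forward calculation'' the paper has in mind (the paper records no further detail than that sentence): uniqueness of the signed base-$B$ digit expansion in~(\ref{notation}), the explicit real and imaginary parts of $S_B$, the three digit transformations for negation, conjugation, and multiplication by $i$, and the same-sign observation that forces $\sum_j a_j b_j$ to vanish only termwise. Your proof of the forward direction of (e) is also fine.

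The difficulty you flagged in the converse of (e) is genuine, and your instinct to test small cases before attempting the termwise equality $|a_j|=|b_j|$ is exactly right: the converse is \emph{false}, so the step you propose to concentrate on cannot be carried out. Permuting digits preserves $\sum_j a_j^2$ without preserving the number. For instance, in base $10$ (or in any base $B\geq 3$, using $a = B+2$ and $b = 2B+1$, with digit pairs $(1,2)$ and $(2,1)$),
\[
S_{10}(12+21i) = (1+2i)^2 + (2+i)^2 = (-3+4i) + (3+4i) = 8i,
\]
which is purely imaginary although $12 \neq \pm 21$; in base $2$ one has $S_2(3+5i) = (1+i)^2 + 1^2 + i^2 = 2i$ although $3 \neq \pm 5$. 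So part (e) as stated in the paper is in error: only the implication ``if $a = \pm b$ then $S_B(a+bi)$ is purely imaginary'' holds. (Nothing downstream is affected, since the paper never invokes part (e) elsewhere.) Your write-up would be complete---and would in fact correct the paper---if you finish it as follows: prove (a)--(d) and (f) as you outline, prove the forward half of (e), and exhibit a counterexample such as the one above for the converse.
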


The following is immediate from Lemma~\ref{basic}(\ref{neg}),(\ref{conj}), and (\ref{multbyi}).

\begin{lemma}\label{rep}
Fix $B\geq 2$.  If $z\in \G$ is a Gaussian $B$-happy number, then so are $-z$, $\pm iz$, $\pm \overline{z}$, and $\pm i\overline{z}$.
\end{lemma}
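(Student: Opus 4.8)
The plan is to deduce Lemma~\ref{rep} directly from parts (\ref{neg}), (\ref{conj}), and (\ref{multbyi}) of Lemma~\ref{basic}, exactly as the text announces. The key observation is that each of the six transformations $z \mapsto -z$, $z \mapsto \pm iz$, $z \mapsto \pm \overline{z}$, and $z \mapsto \pm i\overline{z}$ commutes with $S_B$ up to a transformation of the \emph{same} type, so that iterating $S_B$ on the transformed input is controlled by iterating it on $z$ itself. Concretely, being a $B$-happy number means $S_B^k(z) = 1$ for some $k \in \Z^+$, so the goal is to show that for each listed transformation $T$, some iterate $S_B^k(T(z))$ equals $1$.

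First I would record the three basic commutation relations as operator identities: negation and multiplication by $i$ both satisfy $S_B(-w) = S_B(w)$ and $S_B(iw) = -S_B(w)$, while conjugation satisfies $S_B(\overline{w}) = \overline{S_B(w)}$. The crucial feature is that the set $U = \{\pm 1, \pm i\}$ acts on $\G$ by multiplication, and that $S_B(uw)$ equals $\pm S_B(w)$ (indeed $S_B((\pm 1)w) = S_B(w)$ and $S_B((\pm i)w) = -S_B(w)$, so in all cases $S_B(uw) \in U \cdot S_B(w)$). Combined with conjugation, which commutes with $S_B$ outright, this shows that the group generated by $U$ and complex conjugation is carried into itself by $S_B$ in a way compatible with iteration.

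The main step is then a short induction. I would prove that for every unit $u \in U$ and every $k \geq 1$ there is a unit $u' \in U$ (depending on $u$ and $k$) with $S_B^k(uz) = u' \, S_B^k(z)$, and similarly $S_B^k(u\overline{z}) = u' \, \overline{S_B^k(z)}$. The base case $k=1$ is precisely parts (\ref{neg}), (\ref{conj}), and (\ref{multbyi}); the inductive step applies the same three identities to $S_B^{k-1}$ of the transformed point, using that applying $S_B$ to $u'\,w$ again yields $\pm S_B(w)$ and hence stays within $U \cdot S_B(w)$. Taking $k$ to be the value for which $S_B^k(z) = 1$, I obtain $S_B^k(T(z)) = u'$ for the appropriate unit $u' \in \{\pm 1, \pm i\}$. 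Since each such unit is itself $B$-happy --- indeed $S_B(\pm 1) = 1$ and $S_B(\pm i) = -1$ while $S_B(-1) = 1$, so one more application of $S_B$ sends any unit to $1$ --- the point $T(z)$ reaches $1$ after at most $k+1$ iterations and is therefore $B$-happy.

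I do not expect any serious obstacle here, since the claim genuinely is immediate from the cited parts of Lemma~\ref{basic}; the only point requiring care is bookkeeping of the sign/unit that accumulates under iteration, which the induction above handles cleanly. The one thing to verify explicitly, rather than wave at, is that landing on a unit other than $1$ (namely $-1$ or $\pm i$) still guarantees happiness, which is why I include the observation that every unit maps to $1$ under one or two further applications of $S_B$. With that in hand the proof is complete for all six listed transformations simultaneously.
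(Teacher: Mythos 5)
Your proof is correct and follows the same route the paper intends: the paper offers no written proof, declaring the lemma immediate from Lemma~\ref{basic}(\ref{neg}), (\ref{conj}), and (\ref{multbyi}), and your induction on iterates simply makes that deduction explicit, correctly handling the only subtlety (a unit factor of $\pm 1$ can survive the iteration, and $-1$, like $\pm i$, needs a further application or two of $S_B$ to reach $1$). Note only that, as your own identities show, the accumulated unit is always $\pm 1$ rather than an arbitrary element of $\{\pm 1, \pm i\}$, so the bound of $k+1$ iterations you state is consistent.
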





Although $S_B$ is not an additive function, it has a useful additive property, which generalizes directly to the Gaussian case.

\begin{lemma}\label{pullapart}
Let $a$, $b$, $c$, $d$, $r \in \Z_{\geq 0}$.  If $B^r > \max\{c,d\}$, then
\[S_B\left((a + bi)B^r + (c + di)\right) = S_B(a + bi) + S_B(c + di).\]
\end{lemma}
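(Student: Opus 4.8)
The plan is to exploit the fact that $S_B$ is, by its very definition, a sum over the digit positions of the base-$B$ Gaussian representation. Consequently, $S_B$ is additive whenever two Gaussian integers are represented by digits occupying disjoint position ranges: concatenating the digit strings simply concatenates the terms in the defining sum. The role of the hypotheses is then purely structural. Because $a,b,c,d \in \Z_{\geq 0}$, all of the digits $a_j, b_j, c_j, d_j$ appearing below are nonnegative, so the sign conditions $\sgn(\cdot)(\cdot)_j \geq 0$ in~\eqref{notation} hold automatically and no sign conflict can arise; and because $B^r > \max\{c,d\}$, the low-order summand $c+di$ uses only the bottom $r$ positions, leaving room for the shifted high-order summand.

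First I would dispose of the degenerate cases $a+bi=0$ and $c+di=0$ directly, since $S_B(0)=0$ makes the identity immediate there. Assuming both are nonzero, write $a+bi = \sum_{j=0}^{n}(a_j+b_ji)B^j$ as in~\eqref{notation}, and write $c+di = \sum_{j=0}^{r-1}(c_j+d_ji)B^j$, padding with zero digits as needed; the bound $c,d < B^r$ guarantees that positions $0$ through $r-1$ suffice for $c+di$. Multiplying by $B^r$ shifts the digits of $a+bi$ upward, giving $(a+bi)B^r = \sum_{j=0}^{n}(a_j+b_ji)B^{j+r}$, so that these digits occupy positions $r$ through $r+n$. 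The two blocks of positions, $\{0,\dots,r-1\}$ and $\{r,\dots,r+n\}$, are disjoint.

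The key claim is then that the concatenated digit string, with digit $c_j+d_ji$ in position $j$ for $0\leq j\leq r-1$ and digit $a_{j-r}+b_{j-r}i$ in position $j$ for $r\leq j\leq r+n$, is precisely the representation of $(a+bi)B^r+(c+di)$ satisfying all of the conditions in~\eqref{notation}. Granting this, I would apply the definition of $S_B$ to that representation and split the resulting sum across the boundary at position $r$, obtaining $\sum_{j=0}^{r-1}(c_j+d_ji)^2 + \sum_{j=0}^{n}(a_j+b_ji)^2 = S_B(c+di)+S_B(a+bi)$, which is the desired identity. The only real content — and hence the step I would treat most carefully — is verifying that this concatenation genuinely meets the defining constraints: that each digit still has magnitude at most $B-1$, that the leading digit $a_n+b_ni$ has $a_n,b_n$ not both zero, and above all that the low block induces no carrying into, and no sign disagreement with, the high block. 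All three follow from the hypotheses ($c,d<B^r$ prevents overlap and carrying, and $a,b,c,d\geq 0$ prevents any sign clash), so the obstacle is one of bookkeeping rather than of genuine difficulty.
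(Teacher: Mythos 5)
Your proposal is correct and follows essentially the same route as the paper's proof: write $c+di$ in the bottom $r$ digit positions (possible since $c,d < B^r$), shift the digits of $a+bi$ up by $r$ positions, observe that the concatenation is the valid base-$B$ representation of the sum, and split the defining sum of $S_B$ across the two disjoint blocks. Your extra care with the zero cases and the explicit verification that the concatenated string satisfies the conditions of~(\ref{notation}) only makes explicit what the paper leaves implicit.
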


\begin{proof}
Since $B^r > \max \{c, d\},$ there exist rational integers $c_j$ and $d_j$ such that 
\[c + d i = \sum_{j=0}^{r - 1} (c_j + d_j i)B^j,\] with $0 \leq c_j \leq B-1 $ and $0 \leq d_j \leq B-1$. Using the usual notation, as in~(\ref{notation}), for $a + bi$, we have
\[(a + bi) B^r = \sum_{j=0}^n (a_j + b_j i)B^{j+r} = \sum_{j=r}^{n+r} (a_{j-r} + b_{j-r} i)B^{j}.\]  Thus
\begin{align*}
S_B ((a + bi) B^r + (c + d i))  
&= S_B\left(\sum_{j=r}^{n+r} (a_{j-r} + b_{j-r} i)B^{j} + \sum_{j=0}^{r - 1} (c_j + d_j i)B^j\right) \\
&= \sum_{j = r}^{n+r} (a_{j-r} + b_{j-r} i)^2 + \sum_{j = 0}^{r-1} (c_j + d_j i)^2 \\
&= \sum_{j = 0}^{n} (a_{j} + b_{j} i)^2 + \sum_{j = 0}^{r-1} (c_j + d_j i)^2 \\
&= S_B (a + bi) + S_B (c + d i),
\end{align*}
as desired.
\end{proof}

The remainder of this paper is organized as follows.
In Section 2, we provide a method for computing the fixed points and cycles for the Gaussian
$B$-happy functions and apply it to $S_B$ for $2 \leq B \leq 10$.  In Section 3, we consider heights of Gaussian $B$-happy numbers. Finally, in Section 4, we discuss the existence and non-existence of arbitrarily long arithmetic sequences of Gaussian $B$-happy numbers.

\section{Fixed Points and Cycles of $S_B$}

In this section, we examine the trajectories of the function $S_B$, identifying all fixed points and cycles of the functions, for $2\leq B \leq 10$. First, we prove that, for each $B \geq 2$, when a Gaussian integer is ``sufficiently large," the output of the Gaussian $B$-happy function has a smaller absolute value than the input. This allows for a computer search leading to Tables~\ref{fixedbegin} and~\ref{fixedend}.  Note that by Lemma~\ref{basic}(\ref{conj}) nonreal fixed points and cycles come in conjugate pairs.

\begin{theorem}\label{threedigits}
Let $g \in \G$ satisfy
\[\max\left\{|\real(g)|,|\imag(g)|\right\} \geq \begin{cases}
B^3,&\text{if } B\geq 7;\\
B^4,&\text{if } 2\leq B \leq 6.
\end{cases}
\]
Then $|S_B (g)| < |g|$.
\end{theorem}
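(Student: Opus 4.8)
The plan is to carry out the standard ``the digit-square sum is eventually dominated by the number itself'' estimate, but with the complex magnitude $|\cdot|$ in place of size on $\Z^+$. Write $g = a + bi = \sum_{j=0}^n (a_j + b_j i)B^j$ in the representation of~(\ref{notation}), put $M = \max\{|a|,|b|\}$, and note $|g| \geq M$. The first task is a clean lower bound for $|g|$ in terms of the number of digits $n$. Because the sign conditions on the representation forbid cancellation within each coordinate (each nonzero $a_j$ has the sign of $a$, and likewise for $b$), we have $|a| = \sum_j |a_j| B^j$ and $|b| = \sum_j |b_j| B^j$; since at least one of $|a_n|,|b_n|$ is at least $1$, this yields $M \geq B^n$, and hence $|g| \geq B^n$.

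Next I would convert the hypothesis into a lower bound on $n$. If every digit sat in a position $\leq k$, then $M \leq (B-1)(B^k + \cdots + 1) = B^{k+1}-1 < B^{k+1}$; contrapositively, $M \geq B^{k+1}$ forces $n \geq k+1$. Thus the hypothesis gives $n \geq 3$ when $B \geq 7$ and $n \geq 4$ when $2 \leq B \leq 6$. For the upper bound on $|S_B(g)|$ I would estimate the real and imaginary parts separately: $|\real(S_B(g))| = \left|\sum_j (a_j^2 - b_j^2)\right| \leq (n+1)(B-1)^2$ and $|\imag(S_B(g))| = 2\left|\sum_j a_j b_j\right| \leq 2(n+1)(B-1)^2$, whence $|S_B(g)| \leq \sqrt{5}\,(n+1)(B-1)^2$. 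Combining this with $|g| \geq B^n$, the theorem reduces to the purely elementary inequality
\[
\sqrt{5}\,(n+1)(B-1)^2 < B^n
\]
for the relevant ranges of $(B,n)$.

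I would then establish this inequality by checking base cases and inducting on $n$. At $n = 3$ the inequality is equivalent to $B^3/(B-1)^2 > 4\sqrt{5}$, and at $n = 4$ to $B^4/(B-1)^2 > 5\sqrt{5}$; since each left-hand ratio is increasing in $B$ on the relevant range, it suffices to verify the smallest value of $B$ (namely $B = 7$ in the first case and $B = 2$ in the second). The inductive step is immediate: passing from $n$ to $n+1$ multiplies the right-hand side $B^n$ by $B \geq 2$ while multiplying the left-hand side by $(n+2)/(n+1) \leq 5/4$, so the inequality is preserved.

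The delicate point is that the base case is genuinely tight, and this is exactly what pins down the cutoff at $B = 7$: for $B = 7$, $n = 3$ one is comparing $4\sqrt{5} \approx 8.94$ against $7^3/6^2 \approx 9.53$. A cruder bound on $|S_B(g)|$ — for instance the triangle-inequality estimate $|S_B(g)| \leq \sum_j |a_j + b_j i|^2 = \sum_j(a_j^2 + b_j^2) \leq 2(n+1)(B-1)^2$ — would shift the threshold, so keeping the real and imaginary estimates separate (yielding the factor $\sqrt{5}$) is what makes the stated thresholds come out correctly. The main obstacle is thus not conceptual but the need to verify the base-case inequality uniformly in $B$, which is the one place requiring an honest monotonicity argument rather than a single numerical check.
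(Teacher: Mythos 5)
Your proposal is correct and takes essentially the same approach as the paper's proof: the identical bound $|S_B(g)| \leq \sqrt{5}\,(n+1)(B-1)^2$, the identical lower bound $|g| \geq B^n$, and the identical induction on $n$ for the inequality $B^n > \sqrt{5}\,(n+1)(B-1)^2$. The only differences are cosmetic: you handle the base cases via monotonicity of $B^3/(B-1)^2$ and $B^4/(B-1)^2$ in $B$ (a true and easily verified claim), where the paper checks $B = 2,\dots,8$ numerically and uses $B > 4\sqrt{5}$ for $B \geq 9$, and you make explicit the digit-count argument showing the hypothesis forces $n \geq 3$ (resp.\ $n \geq 4$), which the paper leaves implicit.
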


\begin{proof}
Fix $B \geq 2$.
Let $g = a + b i = \sum_{j = 0}^n (a_j + b_j i) B^j$, as in~(\ref{notation}), with $n \geq 3$ for all $B \geq 2$, and with the added condition that $n \geq 4$ if $B\leq 6$.

For each $0\leq j\leq n$, since $|a_j| \leq B - 1$ and $|b_j| \leq B - 1$, we have
$|a_j^2 - b_j^2| \leq (B - 1)^2$ and $|a_j b_j| \leq (B - 1)^2$.
Thus
\begin{align*}
|S_B(g)| & = \left|\sum_{j = 0}^n (a_j^2 - b_j^2) + 2 \left(\sum_{j = 0}^n a_jb_j\right)i\right|\\ & =
\sqrt{\left(\sum_{j = 0}^n (a_j^2 - b_j^2)\right)^2 + \left(2 \sum_{j = 0}^n a_jb_j\right)^2}\\ & \leq
\sqrt{((B - 1)^2(n+1))^2 + (2(B - 1)^2(n+1))^2} \\ & = 
\sqrt{5}(n+1)(B - 1)^2.
\end{align*}

On the other hand, $|g| \geq \max\{|\real(g)|,|\imag(g)|\} \geq B^n$.  So it suffices to prove that, regardless of the value of $n$, $B^n > \sqrt{5}(n+1)(B - 1)^2$.

The inequality is easy to verify for $2 \leq B \leq 6$ with $n = 4$, and for $B = 7$ or $8$ with $n = 3$.  For $B \geq 9$ with $n = 3$, note that $B > 4 \sqrt{5}$,  which implies that $B^n > \sqrt{5} (n + 1) (B - 1)^2$.
Proceeding now by induction on $n$, fix $B \geq 2$ and assume that $B^n > \sqrt{5} (n + 1) (B - 1)^2$.  It follows that $B^{n+1} > B\sqrt{5}(n+1)(B - 1)^2 > \sqrt{5}(n+2)(B - 1)^2$, as desired.




Hence,
$|S_B(g)| \leq \sqrt{5}(n+1)(B - 1)^2 < B^n \leq |g|$.
\end{proof}

The following corollary is immediate.

\begin{cor}\label{allcyclesthm}
Let $B \geq 2$. Every cycle of $S_B$ contains a point $g = a + bi$ such that $|a| < B^n$ and $|b| < B^n$, where $n = 4$ if $2 \leq B \leq 6$ and $n=3$ if $B\geq 7$.
In particular, we have that every fixed point $g = a + bi$ of $S$ satisfies $|a| < B^n$ and $|b| < B^n$.
\end{cor}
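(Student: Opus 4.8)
The plan is to deduce this directly from Theorem~\ref{threedigits} by an extremal argument applied to each cycle, with the fixed-point assertion falling out as the length-one case. Throughout, let $n$ denote the stated exponent, so $n = 4$ when $2 \leq B \leq 6$ and $n = 3$ when $B \geq 7$. The first thing to record is a reformulation: the two requested inequalities $|a| < B^n$ and $|b| < B^n$ are together equivalent to the single condition $\max\{|\real(g)|, |\imag(g)|\} < B^n$. Thus what must be exhibited in each cycle is a point \emph{failing} the hypothesis of Theorem~\ref{threedigits}, and it is the contrapositive of that theorem that I intend to use: if $|S_B(g)| \geq |g|$, then $\max\{|\real(g)|, |\imag(g)|\} < B^n$.

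Next I would fix an arbitrary cycle of $S_B$. By definition a cycle is a finite orbit, hence a nonempty finite subset of $\G$ that is closed under $S_B$. Because this set is finite and nonempty, the nonnegative real quantity $|g|$ attains a minimum on it; I would choose $g$ to be a point of the cycle of minimal modulus. Since the cycle is closed under $S_B$, the image $S_B(g)$ again belongs to the cycle, and so minimality of $|g|$ forces $|S_B(g)| \geq |g|$. If we had $\max\{|\real(g)|, |\imag(g)|\} \geq B^n$, then Theorem~\ref{threedigits} would yield $|S_B(g)| < |g|$, contradicting this inequality. Therefore $\max\{|\real(g)|, |\imag(g)|\} < B^n$, which is precisely $|a| < B^n$ and $|b| < B^n$ for this point of the cycle.

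For the final assertion, I would observe that a fixed point is exactly a cycle of length one: if $S_B(g) = g$, then $|S_B(g)| = |g|$, so the same contrapositive of Theorem~\ref{threedigits} applies to $g$ itself and gives the bounds directly, with no minimization needed.

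There is essentially no obstacle here, which is why the corollary is flagged as immediate; the only point requiring a word of care is the existence of a minimal-modulus point in the cycle, and this is guaranteed simply by the finiteness of cycles. The entire content of the statement is already carried by Theorem~\ref{threedigits}, and the extremal argument merely converts its ``descent'' conclusion into the desired uniform bound on a distinguished cycle element.
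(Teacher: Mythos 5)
Your proof is correct and is exactly the argument the paper leaves implicit when it declares the corollary ``immediate'' from Theorem~\ref{threedigits}: take a minimal-modulus point of the (finite) cycle, note $|S_B(g)| \geq |g|$ there, and apply the contrapositive of the theorem, with fixed points handled as the length-one case. Nothing is missing.
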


It follows from Corollary~\ref{allcyclesthm} that a direct computer search can determine all fixed points and cycles of the function $S_B$, for any fixed $B \geq 2$.  For $2 \leq B \leq 6$, we applied $S_B$ iteratively to each value $0\leq g < B^4$, recording the resulting fixed points and cycles in Table~\ref{fixedbegin}.  For $7 \leq B \leq 10$, we applied $S_B$ iteratively to each value $0\leq g < B^3$, recording the resulting fixed points and cycles in Table~\ref{fixedend}.  The programs were run using each of MATLAB and Mathematica, thus proving Theorem~\ref{fixed}.

\begin{theorem}\label{fixed}
For $2 \leq B\leq 10$, the fixed points and cycles of $S_B$ are as given in Tables~\ref{fixedbegin} and~\ref{fixedend}.
\end{theorem}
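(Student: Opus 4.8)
The plan is to reduce the statement to a finite, exhaustive computation whose validity rests on Corollary~\ref{allcyclesthm}. Write $n = 4$ when $2 \leq B \leq 6$ and $n = 3$ when $7 \leq B \leq 10$, and let $R_B = \{a + bi \in \G : |a| < B^n,\ |b| < B^n\}$ be the finite ``box'' supplied by the corollary. The first step is to argue that iterating $S_B$ from every point of $R_B$ detects every fixed point and every cycle of $S_B$. In one direction, Corollary~\ref{allcyclesthm} guarantees that each cycle $C$ contains some point $g \in R_B$, and starting the iteration from that $g$ produces an orbit that remains within $C$, so $C$ is recorded. In the other direction, Theorem~\ref{threedigits} shows that the orbit of any point strictly decreases in absolute value whenever it lies outside $R_B$; since $|g|^2 \in \Z_{\geq 0}$, this decrease cannot persist, so every orbit eventually enters $R_B$ and, being thereafter confined to a bounded (hence finite) region, becomes eventually periodic. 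Thus each limiting behavior detected by the search is a genuine cycle or fixed point, and enumerating the orbits from the points of $R_B$ yields exactly the fixed points and cycles of $S_B$.

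The second step is to shrink the search using the symmetries of Lemma~\ref{rep}. The identities of Lemma~\ref{basic}(\ref{neg}),(\ref{conj}),(\ref{multbyi}) intertwine $S_B$ with negation, conjugation, and multiplication by $i$, so each cycle is part of a family closed under the eight maps $z \mapsto -z, \pm iz, \pm \overline{z}, \pm i\overline{z}$. Because $R_B$ is itself invariant under this dihedral group of order eight, it suffices to iterate $S_B$ only from the fundamental-domain representatives $\{a + bi : 0 \leq b \leq a < B^n\}$ and then close the resulting cycles under the group generated by $z \mapsto -z$, $z \mapsto iz$, and $z \mapsto \overline{z}$. This reduction is an efficiency measure only; completeness of the search already follows from the first step.

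The final step is the computation: for each $B$ with $2 \leq B \leq 10$, iterate $S_B$ from every representative in the reduced box, record the distinct fixed points and cycles, apply the symmetries of Lemma~\ref{rep} to recover the full lists, and compare against Tables~\ref{fixedbegin} and~\ref{fixedend}. Carrying this out in both MATLAB and Mathematica and obtaining agreement confirms the tables.

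I expect the main obstacle to be computational rather than mathematical: the theorem's content is precisely the assertion that this finite search returns the tabulated data, so the substantive work lies in implementing $S_B$ faithfully—in particular, computing the base-$B$ expansion~(\ref{notation}) with its prescribed sign conventions on the digits $a_j$ and $b_j$—and in trusting the exhaustive enumeration. Running two independent computer-algebra systems and cross-checking their outputs is the principal safeguard, while the symmetry reduction from Lemma~\ref{rep} doubles as an internal consistency check, since the recovered cycles must respect those symmetries.
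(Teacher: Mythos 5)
Your first step and your reliance on a two-system computation reproduce the paper's own proof: the paper likewise uses Corollary~\ref{allcyclesthm} to reduce the theorem to a finite iteration over a box of starting values and then certifies the tables by running the search in both MATLAB and Mathematica. Your justification of eventual periodicity (absolute values strictly decrease outside the box by Theorem~\ref{threedigits}, and a strictly decreasing sequence of nonnegative integers $|g|^2$ cannot persist) is a correct elaboration of what the paper leaves implicit.

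The symmetry reduction in your second and third steps, however, contains a genuine error. The family of cycles of $S_B$ is \emph{not} closed under the eight maps $z \mapsto -z, \pm iz, \pm\overline{z}, \pm i\overline{z}$; it is closed under conjugation alone, which is exactly why the paper remarks (via Lemma~\ref{basic}(\ref{conj})) that nonreal fixed points and cycles come in conjugate pairs, and why the tables append only ``and its conjugate.'' The obstruction is that Lemma~\ref{basic}(\ref{neg}) and (\ref{multbyi}) give $S_B(-z)=S_B(z)$ and $S_B(iz)=-S_B(z)$, not $-S_B(z)$ and $iS_B(z)$, so negation and multiplication by $i$ do not intertwine with $S_B$ in the way needed to carry cycles to cycles. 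Concretely, for $B=3$ the table lists the cycle $2 \to 11_{(3)} \to 2$, yet $S_3(-2)=4=11_{(3)}$, so $\{-2,\,-11_{(3)}\}$ is not a cycle; likewise $1-2i$, the negative of the cycle point $-1+2i$, is strictly pre-periodic rather than periodic. Hence the procedure you describe --- iterate only over the octant $\{a+bi : 0 \le b \le a < B^n\}$ and then ``close the resulting cycles'' under the group of order eight --- would output sets that are not cycles at all, and the comparison against Tables~\ref{fixedbegin} and~\ref{fixedend} would fail. (Citing Lemma~\ref{rep} here is also off target: that lemma concerns happy numbers, not periodic points.) The reduction is repairable: by Lemma~\ref{basic}, the orbit of $uz$, respectively $u\overline{z}$, for $u \in \{\pm 1, \pm i\}$, agrees with the orbit of $z$, respectively $\overline{z}$, after at most two iterations, so the octant search does detect every cycle up to conjugation, and one should then close the recorded list under conjugation only --- or simply search the full box, which your own first step already justifies.
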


\begin{table}[tbh]
\begin{center}
\begin{tabular}{|c|l|}\hline
$B$ & Fixed Points and Cycles, expressed in base $B$\\
\hline \hline
2 & 0, 1 \\ \hline
3 & 0, 1, 12, 22, 2+11i, 2-11i,\\
& 2 $\rightarrow$ 11 $\rightarrow$ 2, \\ 
&$-1+2i\to -10-11i\to -1+2i$ (and its conjugate), \\
&$-11+22i\to -20-22i\to -11+22i$ (and its conjugate)\\
\hline
4 & 0, 1 \\ \hline
5 & 0, 1, 23, 33, \\
& 4 $\rightarrow$ 31 $\rightarrow$ 20 $\rightarrow$ 4, \\ 
&$3+11i\to 12+11i\to 3+11i$ (and its conjugate) \\
\hline
6 & 0, 1, \\
& 5 $\rightarrow$ 41 $\rightarrow$ 25
$\rightarrow$  45
$\rightarrow$ 105 $\rightarrow$ 42$\rightarrow$ 32
$\rightarrow$ 21 $\rightarrow$ 5\\
\hline
\end{tabular}
\caption{Fixed points and cycles of $S_B$, $2 \leq B \leq 6$.}
\label{fixedbegin}
\end{center}
\end{table}

\begin{table}[bh]
\begin{center}
\begin{tabular}{|c|l|}\hline
$B$ & Fixed Points and Cycles, expressed in base $B$\\
\hline \hline
7 & 0, 1, 13, 34, 44, 63, 25+31i, 25-31i,\\
& 2 $\rightarrow$ 4 $\rightarrow$ 22 $\rightarrow$ 11 $\rightarrow$ 2,
16 $\rightarrow$ 52 $\rightarrow$ 41  $\rightarrow$ 23 $\rightarrow$ 16,\\
&$-15+116i\to -15-116i\to-15+116i$,\\
&$-31+44i\to -31-44i\to-31+44i$,\\
&$-11+51i\to -33-15i\to -11+51i$ (and its conjugate),\\
&$-21+26i\to-50-26i\to -21+26i$ (and its conjugate),\\
&$-1+13i\to -12 - 6 i\to -43 + 33 i\to 10 - 60 i\to$ \\
&\hphantom{mmm}$-50 - 15 i\to -1 + 13 i$ (and its conjugate),\\
&$4+22i\to 11+22i\to -6+11i\to 46-15i\to 35-125i\to $\\
&\hphantom{mmm}$4-116i\to -31-66i\to -116+66i\to -46-150i \to $ \\
&\hphantom{mmm}$ 35+55i\to -22+143i\to -24-40i\to 4+22i$ \\
&\hphantom{mmm}(and its conjugate),\\
&$14+35i\to -23 + 64i\to -54 - 66 i\to -43 + 213i\to 14 - 35i\to $\\
&\hphantom{mmm} $-23 - 64i\to -54 + 66 i\to -43 - 213 i\to 14+35i$,\\
&$-13+15i\to -22 - 44 i\to -33 + 44 i\to -20 - 66 i\to $\\
&\hphantom{mmm}$-125 + 33 i\to 15 - 60 i \to -13-15i\to -22 + 44 i\to $\\
&\hphantom{mmm}$-33 - 44 i\to -20 + 66 i\to -125 - 33 i\to 15 + 60 i \to -13+15i$\\
\hline
8 &$ 0, 1, 24, 64,15+32i, 15-32i, 45+20i, 45-20i$,\\
& 4 $\rightarrow$ 20 $\rightarrow$ 4,
15 $\rightarrow$ 32 $\rightarrow$ 15,
5 $\rightarrow$ 31 $\rightarrow$ 12 $\rightarrow$ 5,\\
&$-34+72i\to -34-72i\to -34+72i$, \\
&$-11+24i\to -22-14i\to -11+24i$ (and its conjugate) \\
&$-40+70i\to -41-70i\to 40+70i$ (and its conjugate), \\
&$4+6i\to -24+60i\to -20-30i\to -5+14i\to 10-50i\to $\\
&\hphantom{mmmm}$-30-12i\to 4+6i$ (and its conjugate) \\ 
\hline
9 & 0, 1, 45, 55, \\
& $75\to 82 \to 75 $,
58 $\rightarrow$ 108 $\rightarrow$ 72$\rightarrow$ 58, \\
&$-4+26i\to -26-53i\to 6+62i\to -4+26i  $ (and its conjugate), \\
&$10+26i\to -43+4i\to 10-26i\to -43-4i\to 10+26i $ \\
\hline
10 & 0, 1, \\
& $4 \rightarrow 16 \rightarrow 37 \rightarrow 58 \rightarrow 89
\rightarrow 145
\rightarrow 42 \rightarrow 20$ $\rightarrow$ 4,\\
& $-52+90i \to -52-90i \to -52+90i$,\\ 
&$35+48i \to -46+104i \to 35-48i \to -46-104i\to  35+48i$, \\ 
&$-15+90i \to -55-18i \to -15+90i$  (and its conjugate) \\
\hline
\end{tabular}
\caption{Fixed points and cycles of $S_B$, $7 \leq B \leq 10$.}
\label{fixedend}
\end{center}
\end{table}

Looking at the odd bases in Tables~\ref{fixedbegin} and~\ref{fixedend}, notice that $S_3$ has fixed points $12_{(3)}$ and $22_{(3)}$, $S_5$ has fixed points $23_{(5)}$ and $33_{(5)}$, $S_7$ has $34_{(7)}$ and $44_{(7)}$, and $S_9$ has $45_{(9)}$ and $55_{(9)}$.  We prove that this pattern holds for all odd bases.

\begin{theorem}\label{fixedeg}
For $B\geq 3$ odd, the numbers 
$(B^2 + 1)/2$ and $(B + 1)^2/2$
are each fixed points of the function $S_B$.
\end{theorem}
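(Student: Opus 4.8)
The plan is to compute the base-$B$ representations of the two candidate numbers explicitly, verify that the resulting digits satisfy the constraints in~(\ref{notation}), and then apply the definition of $S_B$ directly. Since both numbers are positive rational integers, each is purely real with all $b_j = 0$; hence the definition of $S_B$ collapses to the ordinary sum of squares of the (nonnegative) base-$B$ digits, and no imaginary part arises. So the whole argument reduces to reading off two digits in each case and checking a one-line arithmetic identity.

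First I would handle $(B^2+1)/2$. The claim is that its base-$B$ expansion is
\[
\frac{B^2+1}{2} = \frac{B-1}{2}\cdot B + \frac{B+1}{2},
\]
so that $a_1 = (B-1)/2$ and $a_0 = (B+1)/2$. Because $B \geq 3$ is odd, both digits are integers, and $0 \le (B-1)/2 < (B+1)/2 \le B-1$, so this is a legitimate representation. Applying $S_B$ then gives
\[
a_1^2 + a_0^2 = \frac{(B-1)^2 + (B+1)^2}{4} = \frac{2B^2+2}{4} = \frac{B^2+1}{2},
\]
which is the fixed-point condition.

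Next, for $(B+1)^2/2$ the claim is that both base-$B$ digits equal $(B+1)/2$, i.e.
\[
\frac{(B+1)^2}{2} = \frac{B+1}{2}\cdot B + \frac{B+1}{2}.
\]
Again $(B+1)/2$ is an integer with $0 < (B+1)/2 \le B-1$ for $B \ge 3$, so the expansion is valid, and $S_B$ returns $2\bigl((B+1)/2\bigr)^2 = (B+1)^2/2$, as required.

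The only point requiring care—hardly an obstacle—is justifying the two claimed digit expansions and the bound $(B+1)/2 \le B-1$; both rely on $B$ being odd (so that the halves are integers) and $B \ge 3$ (so that the larger digit stays below $B$). Substituting $B = 2k+1$ makes these checks transparent, after which everything else is the elementary arithmetic of squaring and summing the digits.
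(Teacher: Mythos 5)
Your proposal is correct and follows essentially the same route as the paper: both write $(B^2+1)/2 = \frac{B-1}{2}B + \frac{B+1}{2}$ and $(B+1)^2/2 = \frac{B+1}{2}B + \frac{B+1}{2}$ in base $B$ and verify the fixed-point identity by direct computation. Your additional check that the digits are integers in the range $[0, B-1]$ (using $B \geq 3$ odd) is a worthwhile detail the paper leaves implicit.
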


\begin{proof}
Writing each of these in base $B$ notation, we have
\[\frac{B^2 + 1}{2} = \left(\frac{B-1}{2}\right)B+\frac{B+1}{2}
\mbox{\ and\ }
\frac{(B + 1)^2}{2} = \left(\frac{B+1}{2}\right)B+\frac{B+1}{2}.\]
\clearpage
\noindent
Direct calculation then yields
\begin{align*}
S_B\left(\frac{B^2+1}{2}\right) 
    &= \left(\frac{B-1}{2}\right)^2+\left(\frac{B+1}{2}\right)^2 
    = \frac{B^2+1}{2}
\end{align*}
and
\begin{align*}
S_B\left(\frac{(B + 1)^2}{2}\right)     &= \left(\frac{B+1}{2}\right)^2+\left(\frac{B+1}{2}\right)^2 
 = \frac{(B + 1)^2}{2}.
\end{align*}
Thus, for odd $B$, $(B^2 + 1)/2$ and $(B + 1)^2/2$
are fixed points of $S_B$.
\end{proof}

\section{Heights of Gaussian Happy Numbers}
As defined in~\cite{heights}, the height of a $B$-happy number, $a$, is the smallest $k\in \Z_{\geq 0}$ such that $S_B^k(a) = 1$.  The smallest (rational) happy numbers of heights up to at least 12 are known~\cite{onheights,heights}.  
In this section, we first determine the smallest Gaussian $B$-happy numbers of heights 0, 1 and 2, showing that the results are independent of the value of $B$. We then find the smallest Gaussian $B$-happy numbers of height three for $2\leq B \leq 10$.  
Finally, we describe how to find the smallest Gaussian (10-)happy numbers of various heights and compute them for heights less than seven.

Here ``smallest" is taken to mean ``smallest absolute value."  We note that this means that the smallest number of a given height is, generally, not unique.  In fact, for $z\in \G$ of height above two, it follows from Lemma~\ref{basic} that the height and absolute value of $z$ are the same as those of $-z$, $\pm \overline{z}$, $\pm iz$, and $\pm i\overline{z}$. In results for these heights, we record representative numbers, noting that they stand for an entire equivalence class, as described in Lemma~\ref{rep}.

We first show that the values of the smallest Gaussian $B$-happy numbers of heights 0, 1, and 2 are independent of the value of $B$.
\begin{theorem}
Let $B \geq 2$.  The smallest Gaussian $B$-happy numbers of heights 0 and 1 are 1 and -1, respectively.  The smallest Gaussian $B$-happy numbers of height 2 are $i$ and $-i$.
\end{theorem}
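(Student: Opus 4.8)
The plan is to work height-by-height, exploiting the key structural fact from Lemma~\ref{basic}(\ref{2i}): every value $S_B(z)$ lies in $\Geven$, so its imaginary part is even. This severely restricts which Gaussian integers can appear as images, and hence which can have small height.

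First I would handle heights $0$ and $1$ directly. By definition, a number has height $0$ exactly when it equals $1$, so the smallest (indeed only) such number is $1$. For height $1$, I seek the Gaussian integers $z \neq 1$ with $S_B(z) = 1$; the smallest-absolute-value candidates are the units and other small integers. Since $S_B(-1) = 1$ by a one-line computation (or directly by Lemma~\ref{basic}(\ref{neg}), as $S_B(-1) = S_B(1) = 1$), and $-1$ has absolute value $1$, I would argue that nothing of smaller absolute value works: the only Gaussian integers of absolute value less than $1$ is $0$, and $S_B(0) = 0 \neq 1$, while among those of absolute value exactly $1$ we have $\{\pm 1, \pm i\}$, of which $1$ has height $0$ and $\pm i$ map to $-1$ (by Lemma~\ref{basic}(\ref{multbyi}), $S_B(i) = -S_B(1) = -1$), not to $1$. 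This pins down $-1$ as the unique smallest height-$1$ number.

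Next, for height $2$ I want the smallest $z$ with $S_B(z) = -1$ (since $-1$ is the unique smallest height-$1$ value, and I must check no other height-$1$ value is reachable from something smaller than $i$). The candidates of smallest absolute value are $\pm i$, each of absolute value $1$. I would verify $S_B(\pm i) = -1$ via Lemma~\ref{basic}(\ref{multbyi}) as above, and then rule out everything of strictly smaller absolute value: the only such point is $0$, which has height undefined (or is excluded). Among absolute value exactly $1$, the points $1$ and $-1$ already have heights $0$ and $1$, leaving only $\pm i$ as genuine height-$2$ numbers. The parity constraint of Lemma~\ref{basic}(\ref{2i}) is what guarantees that these small values land on the integer values $1$ and $-1$ rather than on some other height-$1$ point with odd imaginary part that would need separate treatment.

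The main obstacle, and the only place requiring care, is the completeness of the case analysis at absolute value $1$: one must confirm that the four unit Gaussian integers exhaust the possibilities and that the height assignments are exactly $1 \mapsto 0$, $-1 \mapsto 1$, $\pm i \mapsto 2$, with no collisions or shorter routes. Because the base-$B$ representation of a unit is trivial (a single digit), all these values of $S_B$ are computed with no dependence on $B$, which is precisely why the answers are uniform across all $B \geq 2$. I would present the three height computations as short explicit evaluations and package the absolute-value comparisons as a single remark that $0$ is the only Gaussian integer of absolute value below $1$ and the units are the only ones of absolute value exactly $1$.
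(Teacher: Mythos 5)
Your proposal is correct and follows essentially the same route as the paper: since $0$ is not $B$-happy, the smallest happy numbers must lie among the units $\{1,-1,i,-i\}$, whose heights $0$, $1$, $2$, $2$ follow from the base-independent evaluations $S_B(-1)=1$ and $S_B(\pm i)=-1$. Your invocation of Lemma~\ref{basic}(\ref{2i}) is unnecessary (the direct computations on single-digit inputs already settle everything), but this is a harmless embellishment, not a gap.
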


\begin{proof}
Since 0 is not a $B$-happy number, the smallest $B$-happy numbers must be of absolute value 1 and, hence, in the set $\{1,-1,i,-i\}$.  Each of these is a Gaussian $B$-happy number and so is the smallest of its height.
\end{proof}

For height 3 and above, the base is significant. 
As seen in Table~\ref{height3}, the smallest Gaussian $B$-happy numbers of height 3 are the same for bases 2 and 4, and those in base 8 and 10 are integer multiples of those for base 2 and 4.  The smallest numbers for the other small bases do not appear to follow a pattern.  The following theorem is verified by direct calculation.

\begin{theorem}
The smallest Gaussian $B$-happy numbers of height 3 for $2 \leq B \leq 10$, are the values, $z$, given in Table~\ref{height3}, along with $-z$, $\pm \overline{z}$, $\pm iz$, and $\pm i\overline{z}$.
\end{theorem}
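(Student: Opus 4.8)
The plan is to reduce the claim to a finite, terminating computation and then argue that this computation both certifies the displayed values and certifies their minimality. The height of a Gaussian integer $z$ is, by definition, determined by iterating $S_B$ until reaching $1$, so the first task is to guarantee that one can always decide, in finitely many steps, whether $z$ is $B$-happy and, if so, compute its height. This is exactly what Theorem~\ref{threedigits} provides: once $\max\{|\real(g)|,|\imag(g)|\}$ exceeds the stated threshold, we have $|S_B(g)| < |g|$, so every orbit eventually enters the finite region $R_B = \{g : |\real(g)|, |\imag(g)| < B^n\}$ (with $n = 4$ for $2 \le B \le 6$ and $n = 3$ for $B \ge 7$) and thereafter cannot leave it. Within the finite set $R_B$ the orbit must either reach $1$, so that $z$ is happy with a well-defined finite height, or enter one of the cycles catalogued in Tables~\ref{fixedbegin} and~\ref{fixedend}, so that $z$ is unhappy and has no height. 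Hence the height function is computable on all of $\G$.

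Given computability of heights, the second step is to turn ``smallest of height 3'' into a bounded search. I would enumerate the Gaussian integers in order of non-decreasing absolute value and compute the height of each; the first value(s) attaining height exactly $3$ are the claimed minimizers. To certify minimality rigorously, I note that once a height-$3$ value of absolute value $r$ is found, only finitely many $z \in \G$ satisfy $|z| \le r$, so confirming that none of smaller absolute value has height $3$ is a finite check. Lemma~\ref{rep} reduces the work further: heights are constant on the eight-element orbit $\{\pm z, \pm i z, \pm\overline{z}, \pm i\overline{z}\}$, so it suffices to test one representative per class, which also explains why Table~\ref{height3} records a single representative $z$ per solution.

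A useful sharpening that guides (but is not logically required by) the search comes from noticing why height $3$ is genuinely base-dependent, unlike heights $0$, $1$, and $2$. A height-$3$ number $z$ satisfies that $S_B(z)$ has height $2$. The two smallest height-$2$ numbers are $\pm i$, yet Lemma~\ref{basic}(\ref{2i}) forces $\imag(S_B(z))$ to be even, so $S_B(z) = \pm i$ is impossible; instead $S_B(z)$ must be a height-$2$ element of $\Geven$, whose location depends on $B$. This is why the minimal height-$3$ values fail to follow the clean, base-free pattern seen at lower heights, and it focuses the search on preimages of the appropriate even-imaginary-part height-$2$ numbers.

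The main obstacle is not any single inequality but assembling the finiteness argument cleanly: one must combine Theorem~\ref{threedigits}, to guarantee that every height computation terminates and that happiness is decidable, with the elementary finiteness of $\{z : |z| \le r\}$, to certify that the search has genuinely located the global minimum rather than a local one. Once both ingredients are in place, the remaining content is the direct computation for each $2 \le B \le 10$, carried out exactly as in the searches producing Tables~\ref{fixedbegin} and~\ref{fixedend}.
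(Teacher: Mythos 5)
Your proposal is correct and matches the paper's approach: the paper's entire proof is the remark that the theorem ``is verified by direct calculation,'' i.e., exactly the finite computer search you describe. What you add---decidability of happiness via Theorem~\ref{threedigits}, finiteness of $\{z : |z| \le r\}$ to certify minimality, and the reduction to one representative per class via Lemma~\ref{rep}---is precisely the justification the paper leaves implicit (it is the same machinery used for Tables~\ref{fixedbegin} and~\ref{fixedend}), so your write-up is a sound, fleshed-out version of the paper's argument rather than a different route.
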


\begin{table}[hbt]
\begin{center}
\begin{tabular}{|c|c|}
\hline
Base  & Smallest Height 3 \\
\hline
2&$1+i$\\
3&$7+10i$\\
4&$1+i$\\
5&$4+15i$\\
6&$11+17i$\\
7&$20+27i$\\
8&$2+2i$\\
9&$2+9i$\\
10&$12+12i$\\
\hline
\end{tabular}
\caption{Representative Smallest Height 3 Gaussian $B$-Happy Numbers.}
\label{height3}
\end{center}
\end{table}

Focusing now on base 10, the smallest Gaussian happy numbers of heights up to a given fixed height can be found using a direct search by computer, or even by hand.  Noting that in the (positive) rational integers, the smallest happy numbers of heights less than 6 are all less than or equal to 23, finding the heights of all Gaussian happy numbers of absolute value at most 23, and then identifying the smallest one of each height, necessarily identifies the smallest ones of heights less than 6.  This search, in fact, identifies the smallest numbers of all heights less than 7, as presented in Table~\ref{heights}.  

\begin{theorem}
The smallest Gaussian happy numbers of heights 0 through 2 are given in Table~\ref{heights}. The smallest Gaussian happy numbers of heights 3 through 6 are the numbers $z$, given in Table~\ref{heights}, along with $-z$, $\pm \overline{z}$, $\pm iz$, and $\pm i\overline{z}$.
\end{theorem}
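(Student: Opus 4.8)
The plan is to reduce the statement to a finite search over the disk $D = \{z \in \G : |z| \le 23\}$ and to verify the entries of Table~\ref{heights} directly, either by computer or by hand. The radius $23$ is chosen because the smallest rational happy numbers of heights $0$ through $5$ are known to be $1$, $10$, $13$, $23$, $19$, and $7$, each of absolute value at most $23$; since a rational happy number of height $h$ is, viewed in $\G$, a Gaussian happy number of the same height and the same absolute value, the disk $D$ is guaranteed to contain at least one Gaussian happy number of every height $0 \le h \le 5$.

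First I would establish the elementary but crucial observation that validates the search: for any fixed height $h$, if $D$ contains a Gaussian happy number of height $h$, then the element of $D$ of least absolute value among those of height $h$ is in fact globally smallest of its height. Indeed, any Gaussian happy number lying outside $D$ has absolute value exceeding $23$, which is at least the absolute value of every element of $D$; hence no happy number outside $D$ can undercut a representative found inside. Combined with the previous paragraph, this shows that the search over $D$ correctly identifies the smallest Gaussian happy number of each height $h \le 5$, and, moreover, correctly identifies the smallest of height $6$ provided the search happens to turn up a height-$6$ representative inside $D$.

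To carry out the search, I would, for each $z \in D$, iterate $S_{10}$, deciding happiness and computing height as follows: by Corollary~\ref{allcyclesthm} the trajectory of $z$ enters the bounded region $|a|, |b| < 10^3$ and is thereafter eventually periodic, so comparing the iterates against the fixed points and cycles listed for $B = 10$ in Table~\ref{fixedend} determines in finitely many steps whether the trajectory reaches $1$ (so $z$ is happy, and the number of steps is its height) or falls into one of the other cycles (so $z$ is unhappy). Recording, for each attained height, a representative of least absolute value yields the entries of Table~\ref{heights}. For heights $0$, $1$, and $2$ the smallest numbers $1$, $-1$, and $\pm i$ are already fixed by the earlier theorem and are not subject to the symmetry reduction; for heights $3$ through $6$ I would invoke Lemma~\ref{basic} and Lemma~\ref{rep}, together with the fact that for $z$ of height above two the absolute value and height are shared by $-z$, $\pm\overline{z}$, $\pm iz$, and $\pm i\overline{z}$, so that a single representative $z$ stands for its entire equivalence class. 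The main obstacle is not any single hard computation but the justification of the search radius: one must argue carefully that bounding the search by the \emph{rational} estimate $23$ suffices to capture the \emph{Gaussian} minima, and in particular that the incidental appearance of height-$6$ representatives inside $D$ is enough, via the boundedness observation above, to certify them as globally smallest.
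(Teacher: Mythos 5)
Your proposal is correct and takes essentially the same approach as the paper: a finite search over the disk of radius $23$, justified by the observation that the smallest rational happy numbers of heights $0$ through $5$ all lie in that disk, with the height-$6$ minimum certified a posteriori because a height-$6$ representative turns up inside the disk and nothing outside the disk can have smaller absolute value. The details you add --- deciding happiness by comparing iterates against the known cycles for $B=10$ and invoking Lemma~\ref{rep} for the symmetry classes at heights $3$ through $6$ --- are precisely what the paper leaves implicit in its phrase ``direct search by computer, or even by hand.''
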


\begin{table}[hbt]
\begin{center}
\begin{tabular}{|r||c|c|c|c|c|c|c|} 
\hline
Height&0&1&2&3&4&5&6\\ 
\hline
Happy&1&10&13&23&19&7&365\\
\hline
Gaussian Happy&1&-1&$\pm i$&$12+12i$&$ 4+4i$&$7$&$5+19i$
\\
\hline
\end{tabular}
\caption{Smallest Happy Numbers~\cite{heights} and Representative Gaussian Happy Numbers of Small Heights.}
\label{heights}
\end{center}
\end{table}

\section{Arithmetic Sequences}

We now consider arithmetic sequences of Gaussian $B$-happy numbers. Following convention, for $D \in \G - \{0\}$, a
{\em $D$-consecutive sequence} is an arithmetic sequence with constant difference $D$.
El-Sedy and Siksek~\cite{basetenseq} showed that there exist arbitrarily long finite 1-consecutive sequences of rational (base 10) happy numbers.  Independently, Grundman and Teeple~\cite{consec} proved the more general result, given below. They also proved that the constant differences given in Theorem~\ref{GT} are the best possible.

 
\begin{theorem}[Grundman \& Teeple]
\label{GT}
If $B \geq 2$ and \[d = \gcd(2,B-1),\]
then there exist arbitrarily long finite
$d$-consecutive sequences of $B$-happy numbers. 
\end{theorem}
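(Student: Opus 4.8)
The plan is to work entirely in $\Z^+$, where the Gaussian function restricts to the classical $S_B$, and to build, for each target length $L$, a single integer $n$ for which $n,\,n+d,\,\dots,\,n+(L-1)d$ are all $B$-happy. Two preliminary observations organize the argument. First, the value of $d$ is forced by a parity computation: writing $n=\sum a_j B^j$ gives $S_B(n)=\sum a_j^2\equiv\sum a_j\pmod 2$, and when $B$ is odd we also have $n\equiv\sum a_j\pmod 2$, so $S_B(n)\equiv n\pmod 2$. Since $1$ is odd, every happy number is then odd, no two integers of opposite parity can both be happy, and $d=2$ is the smallest usable difference; for even $B$ there is no such obstruction and $d=1$. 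Second, the whole construction rests on the additive property of Lemma~\ref{pullapart} specialized to $\Z$: if $0\le c<B^r$, then $S_B(aB^r+c)=S_B(a)+S_B(c)$.

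The core reduction I would set up is as follows. Choose $r$ with $B^r>t+(L-1)d$ and write each term of the putative run as $n+jd=P\cdot B^r+(t+jd)$, where $P$ is a prefix to be selected and $t\ge 0$ is a base offset. By the additive property, $S_B(n+jd)=S_B(P)+S_B(t+jd)$ for every $0\le j<L$. Moreover, one can realize any prescribed nonnegative additive constant as $S_B(P)$: taking $P$ to be the base-$B$ repunit with $c$ ones gives $S_B(P)=c$, and padding with zeros leaves this unchanged. Thus it suffices to exhibit a single constant $c\ge 0$ and an offset $t$ for which every value $c+S_B(t+jd)$, $0\le j<L$, is $B$-happy; each such $n+jd$ then maps in one step to a happy number and is therefore happy. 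This converts the problem into a statement purely about the happy set: the happy numbers must contain a common additive translate of the finite multiset $\{S_B(t+jd):0\le j<L\}$.

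The remaining and genuinely hard step is to produce such a translate for arbitrarily large $L$. I would carry this out by induction on $L$, enlarging the run one term at a time while re-selecting the uniform shift $c$ so that all $L+1$ of the shifted images simultaneously lie in the preimage tree of $1$ under $S_B$; here the $\Z$-analogue of Theorem~\ref{threedigits} guarantees that every integer is eventually driven into a bounded region, so the happiness patterns of $(c+e_0,\dots,c+e_{L-1})$ attainable as $c$ varies are governed by finitely many trajectories. The main obstacle is exactly this simultaneity: the images $S_B(t+jd)$ of an arithmetic progression neither stay constant nor remain confined to a short interval, since the low-order digits cycle and carries make them spread roughly quadratically, so one cannot collapse the finite target set to a single point and must instead engineer the offset $t$ (and, if needed, a multi-digit low block) so that the spread of the images is compatible with a single common shift landing all of them among the happy numbers. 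Controlling this spread, and proving that a suitable $c$ always exists as $L\to\infty$, is where the substance of the Grundman--Teeple argument lies.
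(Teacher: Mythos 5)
First, a point of orientation: Theorem~\ref{GT} is not proved in this paper at all. It is quoted from Grundman and Teeple~\cite{consec} (the base-$10$ case being due to El-Sedy and Siksek~\cite{basetenseq}), and the paper uses it as a black box in proving Theorem~\ref{easyseq} and Theorem~\ref{1+i}. So there is no internal proof to compare against; your proposal has to be measured against the published argument it would need to reproduce.

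Your setup is correct and matches the first move of the published proofs: the parity computation forcing $d$, the digit-splitting identity (Lemma~\ref{pullapart} restricted to $\Z$), and the repunit observation that $S_B\left(\sum_{j=1}^{c} B^j\right) = c$, which together reduce the theorem to the claim that some uniform translate $c + \left\{S_B(t+jd) : 0 \le j < L\right\}$ lies entirely inside the set of $B$-happy numbers. The genuine gap is that this claim \emph{is} the theorem. The whole content of El-Sedy--Siksek and Grundman--Teeple is precisely this simultaneous-translation lemma: for any finite set $E$ of positive integers (with the obvious congruence restriction modulo $d$), there exists $c$ such that $c + e$ is $B$-happy for every $e \in E$. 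You state the reduction and then explicitly defer the rest (``where the substance of the Grundman--Teeple argument lies''), so what you have is a reduction, not a proof. Moreover, the mechanism you gesture at --- induction on $L$, ``re-selecting the uniform shift $c$,'' plus the $\Z$-analogue of Theorem~\ref{threedigits} --- would not close the gap as stated: knowing that every trajectory eventually enters a bounded region tells you the happiness pattern of $(c+e_0,\dots,c+e_{L-1})$ is determined by finitely many data for each $c$, but it gives no reason why the all-happy pattern is ever attained as $c$ varies, and that is exactly what must be shown. The published proof does not vary $c$ and hope; it proves the translation lemma by induction on the cardinality of $E$, using a digit/carrying construction to choose a preliminary shift $v$ under which two elements of $v+E$ acquire the same image under $S_B$, so that applying $S_B$ strictly shrinks the set, with the repunit trick then supplying the additional translate needed to invoke the inductive hypothesis. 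Without that collapsing construction (or a substitute for it), your argument establishes only that the theorem follows from itself in translated form.
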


In this section, we prove, for various values of $D$, that there exist arbitrarily long finite $D$-consecutive sequences of Gaussian $B$-happy numbers, depending on the parity of $B\geq 2$.  We begin by showing that when the base $B$ is odd, such a $D$ must be a $\G$-multiple of $1 + i$.  It follows that all Gaussian $B$-happy numbers are contained in a single coset of the ideal $(1 + i)\G$.  This is the Gaussian analogy to the fact that for $B$ odd, all rational $B$-happy numbers are odd.

\begin{theorem}\label{coset}
Let $B \geq 3$ be odd.  Each Gaussian $B$-happy number is an element of $1 + (1+i)\G$.  In particular, if there is a $D$-consecutive sequence of (at least two) Gaussian $B$-happy numbers, then $D \in (1+i)\G$.
\end{theorem}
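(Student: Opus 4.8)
The plan is to show that the residue of a Gaussian integer modulo the prime $1+i$ is invariant under $S_B$, so that any point iterating to $1$ must share the coset of $1$. Since $1+i$ has norm $2$, the quotient $\G/(1+i)\G$ has exactly two elements, and one checks directly that $(1+i)\G = \{a+bi \in \G : a \equiv b \pmod 2\}$. Consequently $1 + (1+i)\G$ is precisely the set of $a+bi$ with $a \not\equiv b \pmod 2$, and under the identification $\G/(1+i)\G \cong \R/2\R$ the reduction map is given by $\phi(a+bi) = (a+b) \bmod 2$.

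First I would compute $\phi(S_B(a+bi))$ directly. Writing $a+bi$ as in~(\ref{notation}), the definition gives $\real(S_B(a+bi)) = \sum_j (a_j^2 - b_j^2)$, while by Lemma~\ref{basic}(\ref{2i}) the value $\imag(S_B(a+bi))$ is even and so contributes nothing modulo $2$. Using $x^2 \equiv x \pmod 2$ together with $-1 \equiv 1 \pmod 2$, this yields
\[
\phi(S_B(a+bi)) \equiv \sum_{j=0}^n (a_j^2 - b_j^2) \equiv \sum_{j=0}^n (a_j + b_j) \pmod 2.
\]

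The key step is to identify $\sum_j(a_j+b_j) \equiv a + b \pmod 2$, and this is exactly where the hypothesis that $B$ is odd is essential: since $B \equiv 1 \pmod 2$, every power $B^j \equiv 1 \pmod 2$, so $a = \sum_j a_j B^j \equiv \sum_j a_j$ and likewise $b \equiv \sum_j b_j \pmod 2$. Combining these gives $\phi(S_B(a+bi)) = \phi(a+bi)$; that is, $S_B$ fixes each coset of $(1+i)\G$. I expect this congruence---the passage from the digit sums back to $a+b$ using $B$ odd---to be the crux of the argument, with the remaining steps being routine bookkeeping. (Note also that for $B$ even the corresponding reduction would fail, which is why the coset statement is special to odd bases.)

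Finally, if $z$ is a Gaussian $B$-happy number, then $S_B^k(z) = 1$ for some $k \in \Z^+$, so iterating the coset-invariance gives $\phi(z) = \phi(S_B^k(z)) = \phi(1) = 1$, placing $z \in 1 + (1+i)\G$. For the arithmetic-sequence assertion, any two terms $z$ and $z + D$ of a $D$-consecutive sequence of Gaussian $B$-happy numbers both lie in $1 + (1+i)\G$, so their difference $D = (z+D) - z$ lies in the ideal $(1+i)\G$, as claimed.
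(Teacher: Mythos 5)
Your proof is correct and takes essentially the same approach as the paper: both arguments show that $S_B$ fixes each coset of $(1+i)\G$ --- using the three key facts $2 \in (1+i)\G$, $x^2 \equiv x \pmod{2}$, and $B \equiv 1 \pmod{2}$ for $B$ odd --- and then propagate this invariance along the iteration $S_B^k(z) = 1$, your reduction map $\phi(a+bi) = (a+b) \bmod 2$ being just an explicit coordinate form of the paper's congruences modulo $(1+i)\G$. (One notational slip: you want $\G/(1+i)\G \cong \Z/2\Z$, not $\R/2\R$, since in this paper $\R$ denotes the real numbers.)
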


\begin{proof}
Assume that $B \geq 3$ is odd and note that, since $2 \in (1 + i)\G$, for any $a + bi \in \G$,
\begin{align*}
S_B(a + bi) &= S_B\left(\sum_j (a_j + b_ji)B^j \right) \equiv \sum_j (a_j^2 - b_j^2) \equiv \sum_j (a_j - b_j) \\
& \equiv \sum_j \left((a_j - b_j) + (1 + i)b_j\right) \equiv \sum_j (a_j + b_ji)B^j \\
& \equiv a + bi \pmod{(1 + i)\G}.
\end{align*}
Now, if $a + bi$ is a Gaussian $B$-happy number, then for some $k\in \Z^+$, 
$S^k(a + bi) = 1$.
Thus, using an inductive argument, each Gaussian $B$-happy number is congruent to 1 modulo $(1 + i)\G$ and so is an element of $1 + (1+i)\G$.
\end{proof}

The converse of Theorem~\ref{coset} is certainly false: By Theorem~\ref{fixedeg}, for $B$ odd, $(B^2 + 1)/2$ is a fixed point of $S_B$ and, hence, is not a Gaussian $B$-happy number.  Yet, for $B$ odd, $B^2 + 1 \equiv 2 \pmod{4\Z}$ implying that $(B^2 + 1)/2 \equiv 1 \pmod{2\Z}$.  Hence, $(B^2 + 1)/2 \in 1 + 2\Z \subseteq 1 + (1+i)\G$. 

As a corollary to Theorem~\ref{coset}, we see that if $B$ is odd, then each Gaussian $B$-happy number has real and imaginary parts of different parity.

\begin{cor}
Let $B \geq 3$ be odd.  If $a + bi$ is a Gaussian $B$-happy number, then $a + b \equiv 1 \pmod{2\Z}$.
\end{cor}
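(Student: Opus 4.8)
The plan is to derive this directly from Theorem~\ref{coset}, which already places every Gaussian $B$-happy number in the coset $1 + (1+i)\G$. The only additional content is to translate membership in this coset into a parity statement about $a + b$, so the corollary should reduce to a short identification of the ideal $(1+i)\G$.

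First I would describe the ideal $(1+i)\G$ explicitly. Multiplying an arbitrary $x + yi \in \G$ by $1 + i$ yields $(x - y) + (x + y)i$, whose real and imaginary parts sum to $2x$. Hence every element of $(1+i)\G$ has the form $c + di$ with $c + d$ even. Conversely, since $1 + i$ has norm $2$, the index of $(1+i)\G$ in the additive group $\G$ is $2$; the parity map $c + di \mapsto c + d \pmod{2\Z}$ is a surjection onto $\Z/2\Z$ whose kernel also has index $2$ and contains $(1+i)\G$. Comparing indices shows these coincide, so $c + di \in (1+i)\G$ if and only if $c + d \equiv 0 \pmod{2\Z}$.

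Next I would apply this characterization to the coset. If $a + bi$ is a Gaussian $B$-happy number, Theorem~\ref{coset} gives $a + bi \in 1 + (1+i)\G$, so $(a - 1) + bi \in (1+i)\G$. By the previous step, $(a - 1) + b$ is even, and therefore $a + b \equiv 1 \pmod{2\Z}$, as claimed.

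Since the description of $(1+i)\G$ is elementary and Theorem~\ref{coset} supplies the substantive input, I do not anticipate a genuine obstacle; the whole argument is a one-line computation once the ideal is identified. The only point deserving explicit justification is that the parity condition captures the \emph{entire} ideal rather than merely a subset of it, which is exactly what the norm-and-index comparison above secures.
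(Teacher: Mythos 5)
Your proof is correct, and at its core it is the same argument as the paper's: everything follows from Theorem~\ref{coset} together with an elementary parity fact about the ideal $(1+i)\G$. The mechanics differ, though. The paper never characterizes the ideal: it observes that $b - bi = -ib(1+i) \in (1+i)\G$, adds this to $a + bi - 1 \in (1+i)\G$ to land on the rational integer $a + b - 1$, and then uses $\Z \cap (1+i)\G = 2\Z$. You instead prove the full two-sided criterion that $c + di \in (1+i)\G$ if and only if $c + d$ is even, obtaining the converse from a norm/index comparison. That is a clean, reusable statement, but note that your closing remark has the logic inverted: the corollary only uses the containment $(1+i)\G \subseteq \{c + di : c + d \equiv 0 \pmod{2\Z}\}$, which is exactly the easy direction given by your computation $(1+i)(x+yi) = (x-y) + (x+y)i$. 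The converse inclusion --- the part your index argument secures, and which you single out as the one point needing explicit justification --- is never used: you apply the criterion only to pass from $(a-1)+bi \in (1+i)\G$ to $(a-1)+b$ even. So you could delete the index comparison entirely, and the resulting proof would still be complete and essentially as short as the paper's.
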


\begin{proof}
Let $a + bi$ be a Gaussian $B$-happy number.  By Theorem~\ref{coset}, $a + bi - 1 \in (1+i)\G$.  Since $b - bi \in (1+i)\G$, this implies that $a + b - 1 \in (1+i)\G$,
and hence $a + b - 1 \in \Z \cap (1+i)\G  = 2\Z$.
\end{proof}


Before proving a generalization of Theorem~\ref{GT} to Gaussian $B$-happy numbers, we
note the equivalence of the existence of $D$-consecutive sequences of Gaussian $B$-happy numbers for some related values of $D$.  The proof follows easily from Lemma~\ref{rep}.

\begin{lemma}\label{otherds}
If there exists a $D$-consecutive sequence of Gaussian $B$-happy numbers for some $D\in \G - \{0\}$, then there exists a $D^\prime$-consecutive sequence of Gaussian $B$-happy numbers of the same length, for $D^\prime$ equal to each of $-D$, $\pm iD$, $\pm \overline D$, and $\pm i\overline D$.
\end{lemma}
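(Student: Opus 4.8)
The plan is to observe that each of the seven target differences $D'$ arises from $D$ by applying one of the unit/conjugation maps appearing in Lemma~\ref{rep}, and that these maps send arithmetic sequences to arithmetic sequences while preserving the Gaussian $B$-happy property termwise. Concretely, I would fix a $D$-consecutive sequence of Gaussian $B$-happy numbers of length $L$, say
\[
z_0,\ z_0 + D,\ z_0 + 2D,\ \dots,\ z_0 + (L-1)D,
\]
and to each of the seven desired values of $D'$ associate the corresponding transformation $\phi$: namely $\phi(z) = -z$ for $D' = -D$, $\phi(z) = \pm iz$ for $D' = \pm iD$, $\phi(z) = \pm\overline{z}$ for $D' = \pm\overline{D}$, and $\phi(z) = \pm i\overline{z}$ for $D' = \pm i\overline{D}$.

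The key step is to check that applying $\phi$ termwise produces a $D'$-consecutive sequence. For each integer $k$ with $0 \le k \le L-1$, I would compute $\phi(z_0 + kD)$ and show it equals $\phi(z_0) + kD'$. When $\phi$ is one of the $\C$-linear maps ($z \mapsto -z$ or $z \mapsto \pm iz$) this is immediate from linearity, since then $\phi(D) = D'$. When $\phi$ involves conjugation, the only point requiring comment is that conjugation commutes with multiplication by the rational integer $k$: because $k \in \Z \subseteq \R$, we have $\overline{kD} = k\overline{D}$, so for instance $\phi(z_0 + kD) = i\overline{z_0 + kD} = i\overline{z_0} + k(i\overline{D}) = \phi(z_0) + kD'$ in the case $D' = i\overline{D}$, and analogously in the remaining conjugate cases. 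Thus in every case the image is an arithmetic sequence of length $L$ with constant difference $D'$.

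Finally, I would invoke Lemma~\ref{rep} to conclude that each transformed term $\phi(z_0 + kD)$ is again a Gaussian $B$-happy number, since $z_0 + kD$ is one and $\phi(z_0 + kD)$ is, in each case, one of $-z$, $\pm iz$, $\pm\overline{z}$, or $\pm i\overline{z}$ applied to the happy number $z = z_0 + kD$. Hence the image sequence is a $D'$-consecutive sequence of Gaussian $B$-happy numbers of the same length $L$, completing the argument.

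I do not anticipate a genuine obstacle here: the statement is essentially a transport-of-structure fact, and the entire content reduces to Lemma~\ref{rep} together with the elementary commutation $\overline{kD} = k\overline{D}$ for real $k$. The only thing worth stating carefully is that bookkeeping — matching each $D'$ with the correct $\phi$ and verifying the affine identity $\phi(z_0 + kD) = \phi(z_0) + kD'$ — so that the conjugate cases are handled correctly rather than glossed over.
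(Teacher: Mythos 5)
Your proof is correct and follows exactly the route the paper intends: the paper gives no detailed argument, stating only that the lemma ``follows easily from Lemma~\ref{rep},'' and your proposal is precisely that argument made explicit---applying the appropriate unit/conjugation map termwise, using $\overline{kD} = k\overline{D}$ for $k \in \Z$, and invoking Lemma~\ref{rep} to preserve happiness.
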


We now generalize Theorem~\ref{GT}.  
 
\begin{theorem} \label{easyseq}
Fix $B \geq 2$ and let $d = \gcd(2,B-1)$.
There exist arbitrarily long finite
$d$-consecutive sequences of Gaussian $B$-happy numbers, and  
$d$ is the smallest element of $\Z^+$ for which this is true.
The same holds for $-d$-consecutive, $id$-consecutive, and $-id$-consecutive sequences.
\end{theorem}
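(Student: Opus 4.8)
The plan is to split into the two cases determined by the parity of $B$, since $d = \gcd(2,B-1)$ equals $1$ when $B$ is even and $2$ when $B$ is odd. The existence of arbitrarily long $d$-consecutive sequences of Gaussian $B$-happy numbers will follow almost immediately from Theorem~\ref{GT}. The key observation is that every rational $B$-happy number $n$, viewed as $n + 0i \in \G$, satisfies $S_B(n + 0i) = S_B(n)$, so it is also a Gaussian $B$-happy number. Thus any $d$-consecutive sequence of rational $B$-happy numbers, of which Theorem~\ref{GT} guarantees arbitrarily long examples, is simultaneously a $d$-consecutive sequence of Gaussian $B$-happy numbers. This disposes of the existence claim for the difference $d$.

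The claim for the differences $-d$, $id$, and $-id$ then follows from Lemma~\ref{otherds}. Since $d \in \Z^+$ is real, $\overline{d} = d$, so the list of differences $-D$, $\pm iD$, $\pm \overline{D}$, $\pm i\overline{D}$ produced by that lemma from $D = d$ reduces to $d$, $-d$, $id$, $-id$; applying the lemma to the sequences constructed above yields sequences of the same length for each of these differences.

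It then remains to prove the minimality statement, that $d$ is the smallest positive integer for which arbitrarily long $d$-consecutive sequences exist. When $B$ is even we have $d = 1$, and there is no smaller positive integer, so minimality is automatic. When $B$ is odd we have $d = 2$, and the only smaller candidate is $D = 1$, which I would rule out using Theorem~\ref{coset}: a short norm computation shows $1 \notin (1+i)\G$, since every element of $(1+i)\G$ has even norm while $1$ has norm $1$. Hence, by the ``in particular'' clause of Theorem~\ref{coset}, there is no $1$-consecutive sequence of at least two Gaussian $B$-happy numbers, let alone arbitrarily long ones, and so $d = 2$ is indeed the smallest positive integer that works.

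The argument is essentially a synthesis of results already in hand, so I do not anticipate a genuine obstacle; the only point requiring a moment's care is the observation that membership in the single coset $1 + (1+i)\G$ guaranteed by Theorem~\ref{coset} is incompatible with a constant difference of $1$. This parity obstruction is exactly what separates the odd-base case from the even-base case and forces the jump from $d = 1$ to $d = 2$.
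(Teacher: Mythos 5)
Your proof is correct and follows essentially the same route as the paper's: existence from Theorem~\ref{GT} via the fact that rational $B$-happy numbers are Gaussian $B$-happy numbers, minimality for odd $B$ from Theorem~\ref{coset} (your norm computation showing $1 \notin (1+i)\G$ is a worthwhile detail the paper leaves implicit), and the remaining differences from Lemma~\ref{otherds}. No gaps.
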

 
\begin{proof}
The existence of the sequences is immediate from Theorem~\ref{GT}, since rational $B$-happy numbers are also Gaussian $B$-happy numbers. For $B$ even, $d = 1$, which is clearly the minimal value possible.  For $B$ odd, Theorem~\ref{coset} eliminates the possibility of $d = 1$.  Thus the result is best possible in each case.  Lemma~\ref{otherds} proves the final sentence of the theorem.
\end{proof}




Theorem~\ref{1+i}, which holds for all $B \geq 2$, establishes that there exist arbitrarily long finite $(1+i)$-consecutive sequences of Gaussian $B$-happy numbers.  For its proof, we need to define a function that serves as a one-sided inverse for $S_B$.

Fix $B \geq 2$.  We define a function, $R_B: \Z^+ \rightarrow \Z^+$ by, for each $t \in \Z^+$,
\[R_B(t) = \sum_{j=1}^t B^j.\]
Notice that, for each $t\in \Z^+$, 
\begin{equation*}\label{RB}
S_B(R_B(t)) = t.
\end{equation*}

\begin{theorem}\label{1+i}
For $B \geq 2$ and $D = 1 + i$, 
there exist arbitrarily long finite
$D$-consecutive sequences of Gaussian $B$-happy numbers.
\end{theorem}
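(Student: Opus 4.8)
The plan is to realize each length-$L$ sequence explicitly as $z_j = P B^r + j(1+i)$ for $j = 0, 1, \ldots, L-1$, where $P \in \Z^+$ is a fixed ``high block'' and $r$ is chosen large enough that $B^r > L - 1$. This forces $z_{j+1} - z_j = 1+i$ automatically, so the only task is to show that every $z_j$ is a Gaussian $B$-happy number. By Lemma~\ref{pullapart} the high block and the low block $j(1+i)$ separate under $S_B$, and a direct digit computation gives $S_B(j(1+i)) = 2i\,S_B(j)$ (the diagonal digits $(j_k + j_k i)$ each square to $2j_k^2 i$, where $S_B(j)$ now denotes the rational happy value). Hence $S_B(z_j) = S_B(P) + 2i\,S_B(j)$. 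Using the one-sided inverse $R_B$, I can prescribe $S_B(P)$ to be any positive integer I like, so the problem becomes: choose a single integer value $c = S_B(P)$ so that $c + 2i\,S_B(j)$ is happy for every $j$ in the range.

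The engine of the proof is a second collapse. I would take $c$ of the special form $c = C B^q$ with $B^q$ larger than twice the bounded quantity $M := \max_{0 \le j < L} S_B(j)$. Then in $c + 2i\,S_B(j)$ the real digits of $c$ occupy positions $\ge q$ while the imaginary part $2S_B(j)$ occupies only positions $< q$, so Lemma~\ref{pullapart} together with Lemma~\ref{basic}(\ref{multbyi}) yields
\[
S_B\!\left(c + 2i\,S_B(j)\right) = S_B(C B^q) + S_B\!\left(i\cdot 2S_B(j)\right) = S_B(C) - S_B\!\left(2S_B(j)\right).
\]
In other words, after just two applications of $S_B$ every $z_j$ lands on the rational integer $H - S_B(2s_j)$, where $H := S_B(C)$ and $s_j := S_B(j)$. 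Since $s_j \le M$, the subtracted terms $S_B(2s_j)$ all lie in a fixed finite set $\mathcal{T} \subseteq \{0,1,\ldots,T\}$ with $T$ depending only on $L$.

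It therefore suffices to arrange that $H - t$ is a rational $B$-happy number for every $t \in \mathcal{T}$, with a single $H$. This is exactly where Theorem~\ref{GT} enters: taking $d = \gcd(2, B-1)$, I would choose a $d$-consecutive sequence of rational $B$-happy numbers of length exceeding $T/d$ and let $H$ be its largest term, so that $H, H-d, H-2d, \ldots$ are all happy down past $H - T$. For even $B$ one has $d = 1$, so every $H - t$ is covered. For odd $B$ one has $d = 2$, and here the parity matches: since $S_B(N) \equiv N \pmod{2}$ for odd $B$ and $2s_j$ is even, each $t = S_B(2s_j)$ is even, hence a multiple of $d$, so $H - t$ lies in the chosen progression. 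Finally, setting $C = R_B(H)$ and $P = R_B(C B^q)$ realizes the prescribed values, and chaining the iterations $z_j \to c + 2i s_j \to H - S_B(2s_j) \to \cdots \to 1$ completes the argument.

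The main obstacle is that $S_B$ interacts badly with arithmetic progressions: because of the squaring, the values $S_B(j)$ over consecutive $j$ are essentially uncontrolled, so one cannot hope to steer the images into a single happy number or a tidy progression in one step. The resolution, and the crux of the argument, is the two-step collapse above: the first application converts the $(1+i)$-diagonal tail into the purely imaginary quantity $2i\,S_B(j)$, and the second --- after hiding a large real happy value $H$ in high-order digits via $R_B$ --- turns the unruly imaginary parts into a bounded downward perturbation $H - S_B(2s_j)$ of $H$. This reduces the entire Gaussian problem to the existence of sufficiently long rational happy progressions, which Theorem~\ref{GT} already supplies, with only the parity bookkeeping for odd $B$ requiring care.
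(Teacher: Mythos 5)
Your proposal is correct and is essentially the paper's own proof: the same sequence elements (high block times $B^r$ plus $j(1+i)$), the same two-fold application of Lemma~\ref{pullapart} with nested uses of $R_B$ to collapse $S_B^2$ of each term to a rational value of the form $H - S_B(2S_B(j))$, and the same appeal to Theorem~\ref{GT} with the identical parity argument when $B$ is odd. The only differences are cosmetic (separate exponents $q$ and $r$ where the paper uses one, and indexing from $0$ rather than $1$).
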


\begin{proof}
Let $m\in \Z^+$ be arbitrary.  We will show that there exists a $D$-consecutive sequence of $m$ Gaussian $B$-happy numbers.

Let $d = \gcd(2,B-1)$ and
\[M = \max\{S_B(2S_B(k))|1\leq k \leq m\}.\]  
By Theorem~\ref{GT}, there exists 
a sequence of $M$ $d$-consecutive rational $B$-happy numbers, say $a + dj$, for $0 \leq j < M$.
Set $r = 1 + \max\{k,2S_B(k)|1\leq k\leq m\}$. (Note that this means that $r$ is certainly large enough for the application of Lemma~\ref{pullapart} in the following calculation.)

Let $b = R_B(R_B(a+dM)B^r)$.  Then for each $1\leq k \leq m$,
\begin{align*}
S_B^2(bB^r + k(1+i)) &= S_B(S_B(b) + S_B(k(1+i))) \\
& = S_B(S_B(R_B(R_B(a+dM)B^r)) + 2S_B(k)i) \\
& = S_B(R_B(a+dM)B^r + 2S_B(k)i) \\
&= S_B(R_B(a+dM)) + S_B(2S_B(k)i) \\
& = a + dM - S_B(2S_B(k)).
\end{align*}

By the definition of $M$, for each $k$, $1 \leq S_B(2S_B(k)) \leq M$ and, therefore, $a \leq a + dM - S_B(2S_B(k)) < a + dM$.  So, if $d = 1$,
then $a + dM - S_B(2S_B(k))$ is in the sequence of $M$ $d$-consecutive rational $B$-happy numbers.
If $d = 2$, then $B$ is odd, and $S_B(2S_B(k))$ is even.  Hence, 
$a + dM - S_B(2S_B(k))$ is again in the sequence of $M$ $d$-consecutive rational $B$-happy numbers.
Thus, in either case, 
for each $k$, 
$a + dM - S_B(2S_B(k))$ is a $B$-happy number.  
Therefore, for each $1\leq k \leq m$, $bB^r + k(1+i)$ is a Gaussian $B$-happy number, and so these numbers form a $D$-consecutive sequence of $m$ Gaussian $B$-happy numbers.
\end{proof}

Combining Lemma~\ref{otherds} with Theorem~\ref{1+i} yields the corollary.

\begin{cor}
Let $B \geq 2$.
For $D = 1 - i$, $-1 + i$, and $-1 -i$,
there exist arbitrarily long finite
$D$-consecutive sequences of Gaussian $B$-happy numbers.
\end{cor}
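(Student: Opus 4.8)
The plan is to derive all three cases directly from the $(1+i)$ case already established in Theorem~\ref{1+i}, exploiting the symmetry encoded in Lemma~\ref{otherds}. First I would fix $B \geq 2$ and let $m \in \Z^+$ be arbitrary. By Theorem~\ref{1+i} there exists a $(1+i)$-consecutive sequence of $m$ Gaussian $B$-happy numbers, so the only task is to transport this single sequence to each of the three desired difference values without losing its length.

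Next I would apply Lemma~\ref{otherds} with $D = 1+i$. That lemma produces, from the existing sequence, a $D'$-consecutive sequence of the same length $m$ for each $D'$ among $-D$, $\pm iD$, $\pm\overline{D}$, and $\pm i\overline{D}$. The key (and only substantive) step is to verify that the three target differences all occur on this list. Indeed, $-D = -1-i$, while $iD = i(1+i) = -1+i$ and $-iD = -(-1+i) = 1-i$, so that $1-i$, $-1+i$, and $-1-i$ are each realized. As a consistency check, $\overline{D} = 1-i$ and $-\overline{D} = -1+i$ reproduce two of these, and $i\overline{D} = 1+i$ recovers the original case, so no target is missed.

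Finally, since $m$ was arbitrary, each of the three difference values admits arbitrarily long finite consecutive sequences of Gaussian $B$-happy numbers, which is the assertion. I do not anticipate a genuine obstacle: the entire content is the observation that the set $\{\,1+i,\,1-i,\,-1+i,\,-1-i\,\}$ is closed under $z \mapsto -z$, $z \mapsto \pm iz$, and $z \mapsto \pm\overline{z}$, so Lemma~\ref{otherds} applied to the single base case $1+i$ already covers the remaining three. The only point requiring care is bookkeeping the four unit multiples correctly, which the short computation above settles.
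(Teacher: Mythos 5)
Your proof is correct and is exactly the paper's argument: the paper's proof consists of the single sentence ``Combining Lemma~\ref{otherds} with Theorem~\ref{1+i} yields the corollary,'' and your unit-multiple computations ($-D=-1-i$, $iD=-1+i$, $-iD=1-i$) simply make that combination explicit.
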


\section{Acknowledgements}

This work was supported by the Research Experiences for Undergraduate Faculty (REUF) program. REUF is a program of the American Institute of Mathematics (AIM) and the Institute for Computational and Experimental Mathematics (ICERM), made possible by the support from the National Science Foundation (NSF) through DMS 1620073 to AIM and 1620080 to ICERM.  At ICERM, Brown University provided further support through the use of the facilities of its Center for Computation and Visualization.



\begin{thebibliography}{10}

\bibitem{augment}
B.~Baker~Swart, K.~A.~Beck, S.~Crook, C.~Eubanks-Turner, H.~G.~Grundman, M.~Mei,  and 
  L.~Zack, Augmented generalized happy functions, {\it Rocky Mountain J. Math.}, \textbf{47} (2017), 403--417.

\bibitem{desert}
B.~Baker~Swart, K.A.~Beck, S.~Crook, C.~Eubanks-Turner, H.~G.~Grundman, M.~Mei,  and
  L.~Zack, Fixed points of augmented generalized happy functions, {\it Rocky Mountain J. Math.}, \textbf{48} (2018), 47--58.
  
\bibitem{oasis}
B.~Baker~Swart, S.~Crook, H.~G.~Grundman, L.~Hall-Seelig, M.~Mei,  and
  L.~Zack, Fixed points of augmented generalized happy functions II: oases and mirages, {\it J. Integer Seqs.}, {\bf 22} (2019), \#19.5.5.

\bibitem{onheights}
T.~Cai and X.~Zhou,
On the heights of happy numbers,
{\it Rocky Mountain J. Math.}, \textbf{38} (2008), 1921--1926. 


\bibitem{basetenseq}
E.~El-Sedy and S.~Siksek, On happy numbers, {\it Rocky Mountain J. Math.}, \textbf{30} (2000), 565--570.

\bibitem{genhappy}
H.~G. Grundman and E.~A. Teeple, Generalized happy numbers, {\it Fibonacci
  Quart.}, \textbf{39} (2001), 462--466.
  
\bibitem{heights}
H.~G. Grundman and E.~A. Teeple, Heights of happy numbers and cubic happy numbers, {\it Fibonacci
  Quart.}, \textbf{41} (2003), 301--306.
  
\bibitem{fifth}
H.~G. Grundman and E.~A. Teeple, {Iterated sums of fifth powers of digits},
{\it Rocky Mountain J. Math.}, \textbf{38} (2008), 1139--1146.

  
\bibitem{consec}
H.~G. Grundman and E.~A. Teeple, {Sequences of consecutive happy numbers},
{\it Rocky Mountain J. Math.}, \textbf{37} (2007), 1905--1916.



\bibitem{oeis}
OEIS Foundation Inc. (2020), \emph{The On-Line Encyclopedia of Integer Sequences,} http://oeis.org/A007770
\end{thebibliography}
\end{document}